\documentclass[11pt]{amsart}
\usepackage{amssymb, latexsym, amsmath, amsfonts, hyperref, enumitem, tikz, fancyhdr}

\newtheorem{thm}{Theorem}[section]

\newtheorem{lem}[thm]{Lemma}

\theoremstyle{definition}
\newtheorem{defn}[thm]{Definition}
\theoremstyle{remark}

\numberwithin{equation}{section}

\setlength{\oddsidemargin}{0in} \setlength{\evensidemargin}{0in}
\setlength{\textwidth}{6.3in} \setlength{\topmargin}{-0.2in}
\setlength{\textheight}{9in}


\hypersetup{
  colorlinks,
  citecolor=black,
  linkcolor=black,
  urlcolor=blue}
\theoremstyle{plain}

%
  {\end{enumerate}}


\hypersetup{
  colorlinks,
  citecolor=blue,
  linkcolor=blue,
  urlcolor=blue}

\begin{document}
\title{On Functional Calculus For $n$-Ritt Operators}
\keywords{Functional Calculus, Ritt Operators, $n$-Ritt operators, $n$-sectorial operators}

\thanks{The second named author is supported by Council for Scientific and Industrial Research, MHRD, Government of India.}
%
\author{Samya Kumar Ray}
%

\address{Samya Kumar Ray: Department of Mathematics and Statistics, Indian Institute of Technology, Kanpur-208016}
\email{samya@math.iitk.ac.in}

\pagestyle{headings}

\begin{abstract}
In this paper, we present a new class of operators, which we name to be $n$-Ritt operators. This produces a discrete analogue of $n$-sectorial operators or multisectorial operators and generalizes the notion of Ritt operators and bi-Ritt operators. We develop a $H^\infty$ functional calculus for $n$-Ritt operators and prove an useful transference result. We also generalize the notions of quadratic functional calculus and $n$-$R$-Ritt operators and discuss some examples.
\end{abstract}
\maketitle
\section{Introduction}
The notion of functional calculus is quite old and has been developed and successfully used to several different fields of mathematics, e.g. operator theory, harmonic analysis, ordinary differential equations, partial differential equations and control theory. The main goal of functional calculus is to assign a well-defined meaning to $f(T),$ where $f$ is a function and $T$ is an operator (possibly unbounded). One of the famous result in this direction is the von Neumann inequality \cite{vN}, which is a non-trivial example of a bounded holomorphic functonal calculus. The von Neumann inequality was one of the main source of inspirations to develop various topics around functional calculus (see \cite{BC}). However, McIntosh and his coauthors (see \cite{A}, \cite{C}) developed $H^\infty$ functional calculus for sectorial operators and used it to study various problems in partial differential equations and harmonic anlysis. One of the major achievements of this development is the solution to Kato's square root poroblem. Unlike, the von Neumann inequality, the underlying Banach spaces, on which the operators act, can be anything. Very recently, functional calculus of sectorial operators has been effectively used to study maximal regularity problems for stochastic ordinary differential equations.

However, Christian Le Merdy (\cite{M}) developed functional calculus for Ritt operators and obtained many analogous results as one has in the case of sectorial operators (see \cite{AC} and \cite{M}). It has been noticed that the Ritt operators are discrete analogue of sectorial operators and they actually generate discrete analytic semigroups. Ritt operators have also been used to study discrete maximal regulaity problems (see \cite{K}, \cite{B}).

In this paper, we develop a notion of $n$-Ritt operators (one can also call it multi-Ritt operators.), which can be thought of a discrete analogue of $n$-sectorial (multisectorial) operators. It generalizes the notion of Ritt operators and bi-Ritt operators. Bisectorial operators and their notions of $H^\infty$ functional calculus have been successfully developed in \cite{VA}, \cite{A.K.} and \cite{BA}. Also, see \cite{MA} for a nice exposition on sectorial and Bisectorial operators and \cite{W} for an excellent survey on their applications. The notion of bi-Ritt operators was first introduced in. We define the notion of $n$-Ritt operators and the corresponding $H^\infty$ functional calculus and prove an important transference result, proved in \cite{M}. We also study the notion of $R$-$n$-Ritt operators and quadratic functional calculus and prove a corresponding transference result.

In Section \ref{Bi}, we recall the basics of $n$-sectorial operators. In Section \ref{RI}, we introduce the notion of $n$-Ritt operators and develop an $H^\infty$ functional calculus for this class of operators. In section \ref{TR}, we prove a transference result which relates the functional calculus of $n$-Ritt operators with the functional calculus of $n$-sectorial operators, which also can be used to get a good class of examples of $n$-Ritt operators. In section \ref{QU}, we introduce the notion of quadratic functional calculus and prove an analogous transference result. In section \ref{RI}, we develop a notion of $R$-$n$-Ritt operator and again prove a similar transference result.
\section{Functional Calculus for $n$-sectorial operators}\label{Bi}
We briefly recall neccessary preliminaries of $n$-sectorial operators and the useful notion of $R$-boundedness. We denote $\Omega_0:=\{+1,-1\}^{\mathbb{Z}}.$ The set $\Omega_0$ is a compact abelian group. Let us denote the normalized Haar measure on $\Omega_0$ by $\mathbb{P}.$ For any $k\in\mathbb{Z},$ define the $k$-th coordinate function $\epsilon_k$ by $\epsilon_k(\omega):=\omega_k,$ $\omega:=(\omega_k)_{k\in\mathbb{Z}}\in\Omega_0.$ The sequence of i.i.d. random variables $(\epsilon_k)_{k=1}^\infty$ is known to be the Rademacher variables. Let $X$ be a Banach space. We define the Banach space $\text{Rad}(X)$ to be the closure of $\text{span}\{\epsilon_k\otimes\omega_k:k\geq1, x_k\in X\}$ in the Bochner space $L^2(\Omega_0;X).$ Let $E\subseteq B(X)$ be a set of bounded operators in $X$. We say that the set $E$ is $R$-bounded provided there exists a constant $C>0,$ such that for any finite family $(T_k)_{k=1}^N$ of $E$ and a finite family $(x_k)_{k=1}^N$ in $X$, we have
\begin{equation}\label{R}
\big\|\sum_{k=1}^N\epsilon_k\otimes T_k(x_k)\big\|_{\text{Rad(X)}}\leq C\big\|\sum_{k=1}^N\epsilon_k\otimes x_k\big\|_{\text{Rad(X)}}.
\end{equation}
The smallest constant satisfying \eqref{R} is called the $R$-bound of $E$ and is denoted by $R_E$.

In the context of $n$-sectorial ($n$-Ritt) operators, we always fix a positive integer $n>1.$
\begin{defn}[$n$-sectorial ($R$-$n$-sectorial) operators]
For any $\omega\in(0,\frac{\pi}{n}),$ let us denote $\Sigma_\omega$ to be $\Sigma_{\omega}:=\{z\in\mathbb{C}^*:|argz|<\omega\},$ which is the open sector of an angle $2\omega$ around the positive real axis $(0,\infty)$. Let us define the set $\Sigma_{j,\omega}:=\exp\big({\frac{2ij\pi}{n}}\big)\Sigma_{\omega},$ where $0\leq j\leq n-1,$ and for any $T\subseteq\mathbb{C}$ and any complex number $a,$ we denote $aT=:\{az:z\in T\}.$ We define the open $n$-sector of an angle $\omega\in(0,\frac{\pi}{n}),$ as $S_{n,\omega}:=\cup_{j=0}^{n-1}\Sigma_{j,\omega}$ in $\mathbb{C}.$ Let $X$ be a Banach space. We say that a closed operator $A:D(A)\subseteq X\rightarrow X,$ with dense domain $D(A)$ is $n$-sectorial ($R$-$n$-sectorial) of type $\omega\in(0,\frac{\pi}{n})$ if and only if the following two conditions are satisfied.
\begin{enumerate}
\item[(1)]The spectrum of $A$ has the inclusion property $\sigma(A)\subseteq\overline{S}_{n,\omega}.$
\item[(2)]For any $\nu\in(\omega,\frac{\pi}{n}),$ the set $\{zR(z,A):z\in\mathbb{C}\setminus\overline{S}_{n,\nu}\}$ is bounded ($R$-bounded).
\end{enumerate}
\end{defn}

\begin{center}
\begin{tikzpicture}
\draw[gray, thick] (0,3)--(0,-3);
\draw[gray, thick] (0,-3)--(0,3);
\draw[gray, thick] (2,-2) -- (-2,2);
\draw[gray, thick] (-3,0) -- (3,0);
\draw[gray, thick] (-2,-2) -- (2,2);
\draw[gray, thick] (2,2) -- (-2,-2);
\draw[gray,thick] (0,0)--(45:1.5cm);
\draw[gray, thick,<->] (1,0) arc  (0:45:1);  
\node at (1.25,.75){$v$};
\draw[gray,thick,-<] (0,0)--(1,1);
\draw[gray,thick,-<] (0,0)--(-1,1);
\draw[gray,thick,->] (0,0)--(-1,-1);
\draw[gray,thick,->] (0,0)--(1,-1);
\filldraw[black] (0,0) circle (2pt) node[anchor=west]{};
\end{tikzpicture}
\end{center}

For any $\theta\in(0,\frac{\pi}{n})$, let $H_0^\infty(S_{n,\theta})$ denote the algebra of all bounded holomorphic functions $f:S_{n,\theta}\to\mathbb{C},$ for which there exists $c,s>0$ depending only on $f,$ such that 
\begin{equation}
|f(z)|\leq c\frac{|z|^s}{1+|z|^{2s}},\ \ \text{for all}\ z\in S_{n,\theta}.
\end{equation}It is easy to see that $H_0^\infty(S_{n,\theta})$ equipped with the supremum norm is a Banach algebra. Let $0<\omega<\theta<\frac{\pi}{n}$ and $f\in H_0^\infty(S_{n,\theta})$, we define
\begin{equation}\label{C}
f(A):=\frac{1}{2\pi i}\int_{\Gamma_{S_{n,\nu}}}f(z)R(z,A),
\end{equation}
where $\nu\in(\omega,\theta)$ and the contour in \eqref{C} is defined as follows. First, we define the contour $\Gamma_{\nu,n}:=\{te^{i\nu}:\infty>t>0\}\oplus\{te^{-i\nu}:0<t<\infty\},$ which is $\partial\Sigma_{\nu}$ oriented anticlockwise. Thereafter, define $\Gamma_\nu^j:=\exp\big({\frac{2ij\pi}{n}}\big)\Gamma_{\nu,n},$ where $0\leq j\leq n-1.$ It is clear that each $\Gamma_\nu^j$ is $\Sigma_{j,\nu}$ oriented anticlockwise. Therefore, the image of the contour defined by $\oplus_{j=0}^{n-1}\Gamma_\nu^j$ is the boundary of $S_{n,\nu}$ oriented anticlockwise. We denote $\Gamma_{S_{n,\nu}}$ by $\oplus_{j=0}^{n-1}\Gamma_\nu^j$. The integral, defined in \eqref{C} is an improper one. It converges absolutely due to the adequate decay of the resolvent operator. One can use Cauchy's theorem to see that $f(A)$ is well defined.
%

One can prove that the map $\Phi:H_0^\infty(S_{n,\theta})\to B(X),$ defined as $\Phi(A):=f(A),$ is an algebra homomorphism (see \cite{DA} for Bisectorial operators). We say that $A$ admits a bounded $H^\infty(S_{n.\theta})$ functional calculus if and only if the algebra homomorphism $\Phi$ is bounded, i.e. one has \begin{equation*}
\|f(A)\|_{X\to X}\leq C\|f\|_{\infty,S_{n,\theta}},
\end{equation*} 
for some positive constant $C>0,$ which is independent of $f$ and $f\in H_0^\infty(S_{n,\theta}).$
\section{Functional Calculus for $n$-Ritt operators}\label{RI}In this section, we introduce the notion of $n$-Ritt operators and develop an appropriate notion of $H^\infty$ functional calculus.
\begin{defn}[$n$-Ritt ($R$-$n$-Ritt) operator]
For any, $\gamma\in(0,\frac{\pi}{n})$, let $\mathcal{B}_\gamma$ be the interior of the convex hull of $1$ and the disc $D(0,\sin\gamma)$ in the complex plane. Then, $\mathcal{B}_\gamma$ is called the Stolz domain of angle $\gamma\in(0,\frac{\pi}{n})$. Let us denote $\Delta_\gamma:=1-\mathcal{B}_\gamma$ and $\Delta_{j,\gamma}:=\exp(\frac{2ij\pi}{n})\Delta_\gamma.$ We define the open $n$-Stolz domain of an angle $\gamma\in(0,\frac{\pi}{n})$ as  $\mathbb{B}_{n,\gamma}:=1-\big(\cup_{j=0}^{n-1}\Delta_{j,\gamma}\big).$
An operator $T:X\to X$ is said to be an $n$-Ritt ($R$-$n$-Ritt) operator of type $\alpha\in(0,\frac{\pi}{n})$ if it satisfies the following conditions. 
\begin{enumerate}
\item[(1)]The spectrum of $T$ entertains the inclusion property $\sigma(T)\subseteq\overline{\mathbb{B}}_{n,\gamma}.$
\item[(2)]For any $\beta\in(\alpha,\frac{\pi}{n}),$ the set $\{(\lambda-1)R(\lambda,T):\lambda\in\mathbb{C}\setminus\overline{\mathbb{B}}_{n,\beta}\}$ is bounded ($R$-bounded).
\end{enumerate}
\end{defn}

\begin{center}
\begin{tikzpicture}
\draw[gray, thick] (0,3)--(0,-3);
\draw[gray, thick] (0,-3)--(0,3);
\draw[gray, thick,->] (1,0) -- (-1,2);
\draw[gray,thick,->] (1,0)-- (-1,-2);
\draw[gray, thick,->] (1,0) -- (-1,2);
\draw[gray,thick,->] (1,0)-- (3,2);
\draw[gray,thick,->] (1,0)-- (3,-2);
\draw[gray, thick] (-4,0) -- (4,0);
\draw[gray, thick] (1,0) arc  (0:360:1);
\draw[gray, thick] (3,0) arc  (0:360:1);
\draw[gray, thick,fill=gray] (1,0) --  (2,0) arc  (0:45:1) -- cycle;
\draw[gray, thick,fill=gray] (1,0) -- (2,0) arc  (0:-45:1) -- cycle;
\draw[black,thick] (1,0)--(2,0);
\draw[gray, thick,fill=gray] (1,0) --  (0,0) arc  (180:225:1) -- cycle;
\draw[gray, thick,fill=gray] (1,0) --  (0,0) arc  (180:135:1) -- cycle;
\draw[black,thick] (1,0)--(0,0);
\filldraw[black] (0,0) circle (2pt) node[anchor=west]{};
\filldraw[black] (2,0) circle (2pt);
\end{tikzpicture}
\end{center}

The following lemma reveals the intimate connection of $n$-Ritt and $n$-sectorial operator.
\begin{lem}An operator $T:X\to X$ is an $n$-Ritt ($R$-$n$-Ritt) operator, then the operator $A=I-T$ is an $n$-sectorial ($R$-$n$-sectorial) operator.
\end{lem}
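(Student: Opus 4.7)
The plan is to transport the $n$-Ritt hypotheses for $T$ to the $n$-sectorial conditions for $A = I - T$ via the affine change of variables $\lambda = 1 - z$. The spectral mapping theorem gives $\sigma(A) = 1 - \sigma(T)$, while a direct computation yields $R(z,A) = ((z-1)I + T)^{-1} = -R(\lambda, T)$, so that
\[
zR(z,A) = (\lambda - 1)R(\lambda, T).
\]
Thus the candidate $n$-sectorial resolvent bound for $A$ is literally the $n$-Ritt bound for $T$ read through $z \mapsto 1-z$; the only remaining content is to match, under this change of variables, the $n$-Stolz region $\overline{\mathbb{B}}_{n,\gamma}$ around $1$ with the $n$-sector $\overline{S}_{n,\gamma}$ around $0$.

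The hard step will therefore be the purely geometric inclusion $\overline{\Delta_\gamma} \subseteq \overline{\Sigma}_\gamma$ for every $\gamma \in (0, \frac{\pi}{n})$ and, by rotation, $1 - \overline{\mathbb{B}}_{n,\gamma} = \overline{\cup_{j=0}^{n-1} \Delta_{j,\gamma}} \subseteq \overline{S}_{n,\gamma}$. I would prove this by parametrizing any $z \in \mathcal{B}_\gamma$ as $z = t + (1-t)w$ with $t \in [0,1]$ and $|w| < \sin\gamma$, so that $1 - z = (1-t)(1-w)$ and hence $\arg(1-z) = \arg(1-w)$ whenever $t < 1$. A tangent-line computation on the disc $D(1, \sin\gamma)$ -- its tangents from the origin make angles $\pm \gamma$ with the positive real axis, since the radius equals $\sin\gamma$ and the distance from $0$ to the center equals $1$ -- gives $D(1, \sin\gamma) \subseteq \overline{\Sigma}_\gamma$, and the required inclusion follows on taking closures and rotating by $e^{2ij\pi/n}$.

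Assembling the pieces is then routine. The spectral inclusion is immediate: $\sigma(A) = 1 - \sigma(T) \subseteq 1 - \overline{\mathbb{B}}_{n,\alpha} \subseteq \overline{S}_{n,\alpha}$, so condition $(1)$ of the $n$-sectorial definition holds with the same angle $\alpha$. For the resolvent estimate, fix $\nu \in (\alpha, \frac{\pi}{n})$ and pick any $\beta \in (\alpha, \nu)$; since the geometric lemma gives $\overline{\cup_{j=0}^{n-1} \Delta_{j,\beta}} \subseteq \overline{S}_{n,\beta} \subseteq \overline{S}_{n,\nu}$, any $z \notin \overline{S}_{n,\nu}$ forces $\lambda = 1 - z \notin \overline{\mathbb{B}}_{n,\beta}$, and the displayed identity together with the $n$-Ritt hypothesis at $\beta$ supplies the required uniform bound. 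The $R$-$n$-Ritt case needs no new idea: the two families $\{zR(z,A)\}_{z\notin \overline{S}_{n,\nu}}$ and $\{(\lambda-1)R(\lambda,T)\}_{\lambda \notin \overline{\mathbb{B}}_{n,\beta}}$ coincide operator by operator through $\lambda = 1-z$, so $R$-boundedness of the latter is inherited by the former.
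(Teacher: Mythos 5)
The paper states this lemma without any proof at all (it is left as an exercise), so there is nothing to compare your argument against line by line; what I can say is that your proposal is correct and is exactly the argument the paper implicitly relies on elsewhere, e.g.\ in the proof of Theorem 4.1, where the substitution $\lambda = 1-z$ and the identification of $1-\partial S_{n,\beta}$ with $\partial\mathbb{B}_{n,\beta}$ are used freely. Your three ingredients are all sound: the identity $zR(z,A)=(\lambda-1)R(\lambda,T)$ with $\lambda=1-z$, the spectral mapping $\sigma(A)=1-\sigma(T)$, and the geometric inclusion $\overline{\Delta}_\gamma\subseteq\overline{\Sigma}_\gamma$ obtained from the factorization $1-z=(1-t)(1-w)$ and the tangent-line computation for $D(1,\sin\gamma)$ (the tangents from the origin make angle $\arcsin(\sin\gamma)=\gamma$, which is precisely why the Stolz domain is defined with radius $\sin\gamma$). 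The angle bookkeeping in the resolvent step is also handled correctly: choosing $\beta\in(\alpha,\nu)$ guarantees $1-\overline{\mathbb{B}}_{n,\beta}\subseteq\overline{S}_{n,\nu}$, so $z\notin\overline{S}_{n,\nu}$ forces $\lambda\notin\overline{\mathbb{B}}_{n,\beta}$ and the $n$-Ritt bound applies; and since the families $\{zR(z,A)\}$ and $\{(\lambda-1)R(\lambda,T)\}$ coincide elementwise, the $R$-bounded case is indeed immediate. This is a complete proof of a statement the paper leaves unproved.
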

For any, $\gamma\in(0,\frac{\pi}{n})$, we let $H_0^\infty(\mathbb{B}_{n,\gamma})$ to be the space of all bounded holomorphic functions $\phi:\mathbb{B}_{n,\gamma}\to\mathbb{C},$ such that 
\begin{equation}
|\phi(\lambda)|\leq c|1-\lambda|^s,\ \text{for}\ \lambda\in\mathbb{B}_{n,\gamma},
\end{equation}where the constants $c>0$ and $s>0$ depend only on $\phi.$ It is easy to see that the set $H_0^\infty(\mathbb{B}_{n,\gamma}),$ equipped with the supremum norm is a Banach algebra.
Suppose $T$ is an $n$-Ritt operator of type $\alpha\in(0,\frac{\pi}{n})$ and $\gamma\in(\alpha,\frac{\pi}{n})$. Then, for any $\phi\in H_0^\infty(\mathbb{B}_{n,\gamma})$, let us define
\begin{equation}\label{ppp}
\phi(T):=\int_{\Gamma_{\mathbb{B}_{n,\beta}}}\phi(\lambda)R(\lambda,T)d\lambda,
\end{equation}
where, $\beta\in(\alpha,\gamma)$ and $\Gamma_{\mathbb{B}_{n,\beta}}$ is the boundary $\partial\mathbb{B}_{n,\beta},$ oriented counterclockwise. Define the contours 
\begin{eqnarray}
\Gamma_{\mathcal{B}_{\beta}}^1 &=& 1+[\sin\beta\exp(i(\frac{\pi}{2}-\beta))-1],\ 0\leq t\leq 1\\
\Gamma_{\mathcal{B}_{\beta}}^2&=&\sin\beta\exp\theta,\ \frac{\pi}{2}-\beta\leq\theta\leq\frac{3\pi}{2}+\beta\\
\Gamma_{\mathcal{B}_{\beta}}^3&=&\sin\beta\exp(i(\frac{3\pi}{2}+\beta))+t[1-\sin\beta\exp(i\frac{3\pi}{2})],\ 0\leq t\leq 1.
\end{eqnarray}Then, the contour defined by $\Gamma_{\mathcal{B}_{\beta}}:=\Gamma_{\mathcal{B}_{\beta}}^1\oplus\Gamma_{\mathcal{B}_{\beta}}^2\oplus\Gamma_{\mathcal{B}_{\beta}}^3
$ is the boundary of $\mathcal{B}_\beta$ oriented counterclockwise. Define the contour $\Gamma_{\Delta_\beta}:=1-\Gamma_{\mathcal{B}_\beta},$ which is the boundary of $\Delta_\beta$ oriented counter clockwise. Similarly, the contour defined by $\Gamma_{\Delta_{j,\beta}}:=\exp(\frac{2ij\pi}{n})\Gamma_{\Delta_\beta}$ is the boundary of $\Delta_{j,\beta}$ oriented anticlockwise.  we define the contour $\Gamma_{\mathbb{B}_{n,\beta}}:=1-\oplus_{j=1}^{n-1}\Gamma_{\Delta_{j,\beta}}.$ The assumptions on the decay of the resolvent of $T$ confirms that the integral in \eqref{ppp} is absolutely convergent and it does not depend on the angle $\beta$ by Cauchy's theorem. 

We state the following theorem without proof. The proof is a routine check.
\begin{thm}The map $\phi\mapsto\phi(T)$ is an algebra homomorphism from $ H_0^\infty(\mathbb{B}_{n,\gamma})$ to $B(X)$.
\end{thm}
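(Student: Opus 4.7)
The plan is to imitate the standard proof that the Dunford--Riesz calculus for sectorial operators is multiplicative, adapted to the $n$-prong geometry of $\mathbb{B}_{n,\beta}$. First I would verify that each $\phi(T)$ is a well-defined bounded operator on $X$: the decay hypothesis $|\phi(\lambda)|\leq c|1-\lambda|^{s}$ combined with the $n$-Ritt resolvent estimate $\|R(\lambda,T)\|\leq M/|\lambda-1|$ (valid on $\Gamma_{\mathbb{B}_{n,\beta}}\setminus\{1\}$, since this set avoids $\overline{\mathbb{B}}_{n,\beta'}$ for any $\beta'\in(\alpha,\beta)$) makes the integrand in \eqref{ppp} of size $O(|\lambda-1|^{s-1})$ near the common cusp $\lambda=1$; since $s>0$ and the contour has finite length, the integral converges absolutely. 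A standard Cauchy-theorem deformation on $\mathbb{B}_{n,\beta_2}\setminus\overline{\mathbb{B}}_{n,\beta_1}$ then shows that $\phi(T)$ is independent of the choice $\beta\in(\alpha,\gamma)$, and linearity of $\phi\mapsto\phi(T)$ is immediate.

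For multiplicativity, fix $\phi,\psi\in H_0^\infty(\mathbb{B}_{n,\gamma})$ with decay exponents $s,t>0$ and pick nested angles $\alpha<\beta_1<\beta_2<\gamma$. I would represent $\phi(T)$ on the inner contour $\Gamma_1:=\Gamma_{\mathbb{B}_{n,\beta_1}}$ and $\psi(T)$ on the outer contour $\Gamma_2:=\Gamma_{\mathbb{B}_{n,\beta_2}}$, so that $\Gamma_1$ lies strictly inside $\mathbb{B}_{n,\beta_2}$ away from the shared cusp $1$. Writing $\phi(T)\psi(T)$ as an iterated integral and justifying Fubini via the pointwise bound $\|\phi(\lambda)\psi(\mu)R(\lambda,T)R(\mu,T)\|\leq C|\lambda-1|^{s-1}|\mu-1|^{t-1}$, I would next apply the resolvent identity $R(\lambda,T)R(\mu,T)=(R(\lambda,T)-R(\mu,T))/(\mu-\lambda)$ and split into two iterated integrals; the angular separation $\beta_2-\beta_1>0$ at $1$ yields $|\mu-\lambda|\geq c\max(|\lambda-1|,|\mu-1|)$ in a neighbourhood of the cusp, so the split integrals remain absolutely convergent. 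In the first piece, Cauchy's integral formula gives $\int_{\Gamma_2}\psi(\mu)/(\mu-\lambda)\,d\mu=2\pi i\,\psi(\lambda)$ for $\lambda\in\Gamma_1$, which lies in the interior of $\mathbb{B}_{n,\beta_2}$ where $\psi$ is holomorphic. In the second piece, Cauchy's theorem gives $\int_{\Gamma_1}\phi(\lambda)/(\mu-\lambda)\,d\lambda=0$ for $\mu\in\Gamma_2$, which lies outside $\overline{\mathbb{B}}_{n,\beta_1}$ so that $\lambda\mapsto\phi(\lambda)/(\mu-\lambda)$ is holomorphic on a neighbourhood of $\overline{\mathbb{B}}_{n,\beta_1}$. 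Collecting these gives $\phi(T)\psi(T)=(\phi\psi)(T)$, understanding that \eqref{ppp} carries the natural normalisation $1/(2\pi i)$ as in \eqref{C}.

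The main technical obstacle is the shared cusp at $\lambda=1$, where the two contours meet and where the resolvent can blow up: both the Fubini step and the Cauchy-type evaluations require justification there. My plan is to handle this by the standard device of excising a small disc $B(1,\epsilon)$ from both contours, carrying out the argument on the truncated disjoint contours $\Gamma_j\setminus B(1,\epsilon)$, and passing to the limit $\epsilon\to 0$; the polynomial decay of $\phi$ and $\psi$ at $1$ ensures that the contributions from the excised arcs vanish in the limit. Beyond this local issue, the $\mathbb{Z}/n$-symmetric geometry of $\mathbb{B}_{n,\beta}$ introduces no genuinely new difficulty compared with the classical Ritt case, since $\Gamma_{\mathbb{B}_{n,\beta}}$ is still the positively oriented boundary of a bounded, simply connected planar domain enclosing $\sigma(T)$.
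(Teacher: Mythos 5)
The paper does not actually supply a proof of this statement --- it is explicitly left as ``a routine check'' --- so there is nothing to compare your argument against line by line; what you have written is the standard Dunford--Riesz multiplicativity argument (nested contours, resolvent identity, Fubini, Cauchy's formula on the outer contour and Cauchy's theorem on the inner one), and it is correct for this setting. Two remarks. First, you are right to insist on the normalisation $1/(2\pi i)$ in \eqref{ppp}: as printed, the formula omits it, and without it the map would satisfy $\phi(T)\psi(T)=2\pi i\,(\phi\psi)(T)$ rather than being a homomorphism, so flagging this is a genuine (and necessary) correction rather than a cosmetic one. Second, your closing geometric remark is inaccurate: $\mathbb{B}_{n,\beta}$ is not a simply connected domain but a \emph{disjoint} union of $n$ open Stolz-type petals whose closures meet only at the vertex $1$ (which does not belong to the open set), so $\Gamma_{\mathbb{B}_{n,\beta}}$ is the union of $n$ closed curves sharing the point $1$. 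This does not damage the proof --- the Cauchy integral formula step $\int_{\Gamma_2}\psi(\mu)(\mu-\lambda)^{-1}d\mu=2\pi i\,\psi(\lambda)$ is carried out on the petal of $\mathbb{B}_{n,\beta_2}$ containing $\lambda$, with the integrals over the boundaries of the other petals vanishing because $\lambda$ lies outside their closures, and the condition $\beta<\pi/n$ is exactly what keeps the petals disjoint so that this works --- but the argument should be stated petal by petal rather than by appeal to simple connectivity of the whole domain. Your excision of $B(1,\epsilon)$ is also genuinely needed at a second spot you only implicitly use: $\phi$ and $\psi$ are holomorphic only on $\mathbb{B}_{n,\gamma}$, which excludes the vertex $1$ lying on every contour, so the Cauchy formula and Cauchy theorem steps themselves (not just Fubini) require the limiting argument, which your decay hypotheses $|\phi(\lambda)|\leq c|1-\lambda|^{s}$, $|\psi(\lambda)|\leq c|1-\lambda|^{t}$ justify.
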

\begin{lem} Let $T$ be an $n$-Ritt operator of type $\alpha\in(0,\frac{\pi}{n})$, then $rT$ is an $n$-Ritt operator for any $r\in(0,1)$ and
\begin{itemize}
\item[(1)] For any $\beta\in(\alpha,\frac{\pi}{n})$, the set $\{(\lambda-1)R(\lambda,rT):r\in(0,1),\lambda\in\mathbb{C}\backslash\mathbb{B}_{n,\beta}\}$ 
is bounded.
\item[(2)]For any $\gamma\in(\alpha,\frac{\pi}{n}),$ and $\phi\in H_0^\infty(\mathbb{B}_{n,\gamma})$, $\phi(T)=\lim_{r\to 1^-}\phi(rT)$.
\end{itemize}
\end{lem}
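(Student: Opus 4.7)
The plan is to prove (1) first, via a scaling identity for the resolvent, and then deduce (2) by dominated convergence applied to the contour integral that defines $\phi(rT)$.

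For part (1), I would start from the scaling identity
\begin{equation*}
R(\lambda, rT) = \frac{1}{r}\, R\!\left(\frac{\lambda}{r}, T\right), \qquad \lambda \notin r\,\sigma(T),
\end{equation*}
which rearranges as
\begin{equation*}
(\lambda - 1)\, R(\lambda, rT) = \frac{\lambda - 1}{\lambda - r} \cdot \left(\frac{\lambda}{r} - 1\right) R\!\left(\frac{\lambda}{r}, T\right).
\end{equation*}
Fixing $\beta \in (\alpha, \frac{\pi}{n})$ and choosing an intermediate $\beta' \in (\alpha, \beta)$, the key geometric step is to show that $\lambda \in \mathbb{C} \setminus \overline{\mathbb{B}}_{n, \beta}$ together with $r \in (0, 1)$ forces $\lambda/r \in \mathbb{C} \setminus \overline{\mathbb{B}}_{n, \beta'}$, so that the last factor is uniformly controlled by the $n$-Ritt hypothesis on $T$. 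Simultaneously $|\lambda - 1|/|\lambda - r|$ is uniformly bounded over $r \in (0, 1)$ and $\lambda \notin \overline{\mathbb{B}}_{n, \beta}$, because the segment $[0, 1]$ is contained in $\overline{\mathbb{B}}_{n, \beta}$, and the resulting positive distance from $\lambda$ to $[0, 1]$ controls the denominator in terms of $|\lambda - 1|$. Combining the two estimates gives the bound in (1); that $rT$ is $n$-Ritt then follows at once, since $\sigma(rT) = r\sigma(T)$ can be placed inside some $\overline{\mathbb{B}}_{n, \gamma}$ by the same scaling geometry and the resolvent bound has just been established.

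For part (2), I would let $r \to 1^{-}$ inside the contour integral
\begin{equation*}
\phi(rT) = \int_{\Gamma_{\mathbb{B}_{n, \beta}}} \phi(\lambda)\, R(\lambda, rT)\, d\lambda
\end{equation*}
via dominated convergence. Pointwise convergence $R(\lambda, rT) \to R(\lambda, T)$ at each $\lambda$ on the contour follows from the resolvent identity
\begin{equation*}
R(\lambda, rT) - R(\lambda, T) = (r - 1)\, R(\lambda, rT)\, T\, R(\lambda, T)
\end{equation*}
together with the norm bound provided by (1). For a dominating function, combining $\|(\lambda - 1)\, R(\lambda, rT)\| \le C$ with the decay $|\phi(\lambda)| \le c\, |1 - \lambda|^s$ built into the definition of $H_0^\infty(\mathbb{B}_{n, \gamma})$ yields
\begin{equation*}
\bigl\|\phi(\lambda)\, R(\lambda, rT)\bigr\| \le Cc\, |1 - \lambda|^{s - 1},
\end{equation*}
which is integrable on the compact contour $\Gamma_{\mathbb{B}_{n, \beta}}$ since $s > 0$ and the only potential singularity is at the corner $\lambda = 1$.

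The main obstacle is the geometric step in (1): proving, uniformly in $r \in (0, 1)$, that scaling by $1/r$ sends $\mathbb{C} \setminus \overline{\mathbb{B}}_{n, \beta}$ into $\mathbb{C} \setminus \overline{\mathbb{B}}_{n, \beta'}$. For $n = 1$ this is transparent because the single Stolz region $\mathcal{B}_\alpha$ is convex and contains $0$, hence is star-shaped from the origin. For $n \ge 2$, however, one works with the union of rotated translates $1 - e^{2ij\pi/n}(1 - \mathcal{B}_\gamma)$, and one must handle $\lambda$ in different regimes separately: near the common vertex at $1$, near each outer corner $1 - e^{2ij\pi/n}$, and far away, deriving explicit distance estimates in each.
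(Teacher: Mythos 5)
Your outline follows the same route as the paper's proof: the scaling identity $(\lambda-1)R(\lambda,rT)=\frac{\lambda-1}{\lambda-r}\big(\frac{\lambda}{r}-1\big)R\big(\frac{\lambda}{r},T\big)$, a uniform bound on $(\lambda-1)/(\lambda-r)$, and dominated convergence for part (2). The difference is that you explicitly flag the geometric step --- that $\lambda\notin\overline{\mathbb{B}}_{n,\beta}$ and $r\in(0,1)$ force $\lambda/r\notin\overline{\mathbb{B}}_{n,\beta'}$ --- as the main obstacle, whereas the paper dismisses it as ``easy to see.'' You are right to be suspicious, but the obstacle is worse than you suggest: the claim is not merely delicate for $n\ge 2$, it is \emph{false}, and no choice of intermediate angle $\beta'$ or case analysis by regimes will rescue it. Take $n=2$, so that $\mathbb{B}_{2,\beta}=\mathcal{B}_\beta\cup(2-\mathcal{B}_\beta)$ is a ``bowtie'' whose two petals meet only at the common vertex $1$. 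Let $\mu=2+it$ with $0<t<\sin\alpha$, so $\mu$ lies in the bulb of the second petal and may even lie in $\sigma(T)$. With $r=\tfrac12$ and $\lambda=r\mu=1+\tfrac{it}{2}$, the point $\lambda$ sits at angle $\pi/2$ from the vertex $1$ and hence lies outside $\overline{\mathbb{B}}_{2,\beta}$ for every $\beta<\pi/2$; yet $\lambda/r=\mu$ lies in $\overline{\mathbb{B}}_{2,\beta'}$ for every $\beta'>\alpha$, and indeed may be a spectral point of $T$, in which case $\lambda\in\sigma(rT)$ and $R(\lambda,rT)$ does not exist. The same example shows $\sigma(rT)=r\sigma(T)$ need not be contained in any $\overline{\mathbb{B}}_{n,\gamma}$, so $rT$ need not even be $n$-Ritt. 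The root cause is that $\mathbb{B}_{n,\beta}$ is star-shaped about the vertex $1$ (each set $\Delta_{j,\beta}$ is convex with $0$ in its closure), not about the origin; multiplication by $r$ contracts toward $0$ and drags points out of one petal without landing them in another.

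Consequently your program for part (1) cannot be completed as described, and the same criticism applies to the paper's own one-line justification. The standard repair is to replace $rT$ by $T_r:=(1-r)I+rT$, i.e., to contract toward $1$ rather than toward $0$: since $1-\mathbb{B}_{n,\beta}=\cup_j\Delta_{j,\beta}$ and $r\Delta_{j,\beta}\subseteq\Delta_{j,\beta}$, the affine map $z\mapsto 1-r(1-z)$ sends $\mathbb{B}_{n,\beta}$ into itself, the analogue of your scaling identity holds with $\frac{1-\lambda}{r}$ in place of $\frac{\lambda}{r}-1$, and the rest of your argument (the uniform bound on the scalar factor via the distance to the segment joining the vertex to the petal centers, and the dominated-convergence step with majorant $Cc|1-\lambda|^{s-1}$) goes through essentially verbatim. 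For $n=1$ your argument and the paper's are both fine as written, since $\mathcal{B}_\beta$ is convex and contains $0$.
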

\begin{proof}
\begin{itemize}
\item[1.]
Let us consider $\beta\in(\alpha,\frac{\pi}{n})$ and $r\in(0,1).$ It is easy to see that if $\lambda\in\mathbb{C}\setminus\mathbb{B}_{n,\beta},$ then $\frac{\lambda}{r}\in\mathbb{C}\setminus
\overline{\mathbb{B}}_{n,\beta}$ and $\lambda\not\in\sigma(rT).$ Therefore, we have $$(\lambda-1)R(\lambda,rT)=\frac{\lambda-1}{\lambda-r}\big(\frac{\lambda}{r}-1\big)R\big(\frac{\lambda}{r},T\big).$$
Now, as the set $\{(\lambda-1)(\lambda-r)^{-1}:r\in(0,1),\lambda\in\mathbb{C}\backslash\mathbb{B}_\beta\}$ is bounded, we get the required conclusion.
\item[2.] This follows easily by using (1) and applying Lebesgue's dominated convergence theorem. 
\end{itemize}
\end{proof}

We now use the above lemma to extend the functional calculus of $n$-Ritt operators to a larger class of holomorphic functions. Let $H_{c,0}^\infty(\mathbb{B}_{n,\gamma})\subseteq H^\infty(\mathbb{B}_{n,\gamma})$
be the linear span of $H_0^\infty(\mathbb{B}_{n,\gamma})$ and the constant functions. It is immediate that $H_{c,0}^\infty(\mathbb{B}_{n,\gamma})$ is a unital Banach algebra equipped with the supremum norm. For
any $\psi=c+\phi,\ \phi\in H_0^\infty(\mathbb{B}_{n,\gamma}),$ let us define $\psi(T):=cI_X+\phi(T).$ Therefore, we have a unital algebra homomorphism  $\psi\mapsto\psi(T)$ from $H_{c,0}^\infty(\mathbb{B}_{n,\gamma})$ 
to $B(X).$ Let $\mathbb{C}[Z]$ be the set of all complex polynomials. Suppose $\phi\in\mathbb{C}[Z].$ One can represent $\phi$ as $\phi(z)=(z-1)\tilde{\phi}(z)+\phi(1),$ where $\tilde{\phi}\in\mathbb{C}[z]$ is a 
polynomial. Therefore, we conclude that $\phi\in H_{c,0}^\infty(\mathbb{B}_{n,\gamma}).$ Thus, one can use Runge's theorem to see that $H_{c,0}^\infty(\mathbb{B}_{n,\gamma})$ contains rational functions with poles off $\overline{\mathbb{B}}_{n,\gamma}.$ Therefore, $H_{c,0}^\infty(\mathbb{B}_{n,\gamma})$ contains all
polynomials. Therefore, for any $T$ as above and any $r\in(0,1),$ we have $\sigma(rT)=r\sigma(T)\subseteq\mathbb{B}_{n,\beta}.$ Hence, the definition of $\phi(rT)$ given by agrees with the usual Dunford-Riesz functional
calculus and by the above lemma for any rational function $\phi$ with poles off $\overline{\mathbb{B}}_{n,\gamma},$ the above definition of $\phi(T)$ coincides with the one obtained by substituting $T$ in $\phi.$ 
Thus this notion of the functional calculus agrees with the usual functional calculus for polynomials.
\begin{defn}
Let $T$ be an $n$-Ritt operator of type $\alpha\in(0,\frac{\pi}{n})$ and $\gamma\in(\alpha,\frac{\pi}{n})$. We say that $T$ admits a bounded $H^\infty(\mathbb{B}_{n,\gamma})$ functional calculus if there exists a 
constant $K>0$, such that
\begin{equation*}
\|\phi(T)\|\leq K\|\phi\|_{\infty,\mathbb{B}_{n,\gamma}},\ \text{for all}\ \phi\in H_0^\infty(\mathbb{B}_{n,\gamma}).
\end{equation*}
\end{defn}
The following theorem shows that it is enough to show the boundedness of the functional caculus on the set of polynomials.
\begin{thm}Let $T$ be a $n$-Ritt operator of type $\alpha\in(0,\frac{\pi}{n})$ and $\gamma\in(\alpha,\frac{\pi}{n})$. We say that $T$ admits a bounded $H^\infty(\mathbb{B}_{n,\gamma})$ functional calculus if and only if 
\begin{equation}\label{n}
\|\phi(T)\|\leq K\|\phi\|_{\infty,\mathbb{B}_{n,\gamma}},
\end{equation}
for all polynomial $\phi\in\mathbb{C}[Z].$
\end{thm}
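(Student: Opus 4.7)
The ``only if'' implication is formal. For any polynomial $P\in\mathbb{C}[Z]$, decompose $P(z) = (z-1)\widetilde{P}(z) + P(1)$ with $\widetilde{P}\in\mathbb{C}[Z]$. The factor $\psi(z) := (z-1)\widetilde{P}(z)$ belongs to $H_0^\infty(\mathbb{B}_{n,\gamma})$ because it vanishes to first order at $1$ and the domain is bounded. Applying the assumed $H_0^\infty$-bound to $\psi$, using $\|\psi\|_{\infty,\mathbb{B}_{n,\gamma}}\leq 2\|P\|_{\infty,\mathbb{B}_{n,\gamma}}$, and combining with $P(T) = \psi(T) + P(1)I_X$ together with $|P(1)|\leq\|P\|_{\infty,\mathbb{B}_{n,\gamma}}$, produces a polynomial estimate with constant at most $2K+1$.

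For the substantive ``if'' direction, the plan is to approximate an arbitrary $\phi\in H_0^\infty(\mathbb{B}_{n,\gamma})$ by polynomials and transfer the bound through the integral representation \eqref{ppp}. The key geometric facts about $\mathbb{B}_{n,\gamma}$ are that it is open, bounded, star-shaped with respect to the cusp point $1$ (being the union of $n$ convex Stolz regions meeting only at $1$), and that $\mathbb{C}\setminus\overline{\mathbb{B}}_{n,\gamma}$ is connected. For each $r\in(0,1)$ I would introduce the $1$-dilate $\phi_r(z) := \phi(1+r(z-1))$. Star-shapedness about $1$ forces $1+r(\overline{\mathbb{B}}_{n,\gamma}-1)$ to be relatively compact in $\mathbb{B}_{n,\gamma}$, so $\phi_r$ extends holomorphically to an open neighborhood of $\overline{\mathbb{B}}_{n,\gamma}$; moreover $|\phi_r(z)|\leq cr^s|1-z|^s$ shows $\phi_r\in H_0^\infty(\mathbb{B}_{n,\gamma})$ with $\|\phi_r\|_{\infty,\mathbb{B}_{n,\gamma}}\leq\|\phi\|_{\infty,\mathbb{B}_{n,\gamma}}$.

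Mergelyan's theorem, applied to the compact set $\overline{\mathbb{B}}_{n,\gamma}$ and the function $\phi_r$ (holomorphic on a neighborhood of this set), then supplies polynomials $P_k$ with $P_k\to\phi_r$ uniformly on $\overline{\mathbb{B}}_{n,\gamma}$, so in particular $P_k(1)\to\phi_r(1)=\phi(1)=0$. Set $Q_k := P_k - P_k(1)\in H_0^\infty(\mathbb{B}_{n,\gamma})$ and compare $Q_k(T)$ with $\phi_r(T)$ via \eqref{ppp}: a dominated-convergence argument on $\Gamma_{\mathbb{B}_{n,\beta}}$, with $\beta\in(\alpha,\gamma)$, using uniform smallness of $Q_k-\phi_r$ away from the cusp $1$ together with the joint decay of $|Q_k|$ and $|\phi_r|$ against the $|1-\lambda|^{-1}$ resolvent singularity at $1$, forces $Q_k(T)\to\phi_r(T)$, hence $P_k(T)=Q_k(T)+P_k(1)I_X\to\phi_r(T)$ in $B(X)$. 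The polynomial hypothesis $\|P_k(T)\|\leq K\|P_k\|_{\infty,\mathbb{B}_{n,\gamma}}$ passes to the limit as $\|\phi_r(T)\|\leq K\|\phi\|_{\infty,\mathbb{B}_{n,\gamma}}$, uniformly in $r$. Finally, $\phi_r\to\phi$ pointwise on $\mathbb{B}_{n,\gamma}$ with $|\phi_r|\leq\|\phi\|_{\infty,\mathbb{B}_{n,\gamma}}$, and a further dominated convergence in \eqref{ppp} gives $\phi_r(T)\to\phi(T)$ as $r\to 1^-$, yielding the sought bound.

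The delicate point is the polynomial approximation step. It rests on two geometric properties extracted from the definition $\mathbb{B}_{n,\gamma}=1-\cup_{j=0}^{n-1}\Delta_{j,\gamma}$: connectedness of $\mathbb{C}\setminus\overline{\mathbb{B}}_{n,\gamma}$ (needed for Mergelyan) and star-shapedness of $\mathbb{B}_{n,\gamma}$ about $1$ (needed so that the $1$-dilate genuinely extends past $\overline{\mathbb{B}}_{n,\gamma}$). Both follow from the facts that each $\Delta_{j,\gamma}$ is a convex region with connected complement, that the rotations $\exp(2\pi ij/n)$ fix $0$, and that the affine change of variable $z\mapsto 1-z$ preserves convexity and connectedness of the complement.
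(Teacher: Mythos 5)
Your ``only if'' direction is fine: the decomposition $P=(z-1)\widetilde P+P(1)$ is exactly how the paper places polynomials inside $H_{c,0}^\infty(\mathbb{B}_{n,\gamma})$. The ``if'' direction, however, has a genuine gap at its foundation: the claim that star-shapedness of $\mathbb{B}_{n,\gamma}$ about $1$ makes $1+r(\overline{\mathbb{B}}_{n,\gamma}-1)$ relatively compact in $\mathbb{B}_{n,\gamma}$ is false. The point $1$ is a \emph{boundary} point of $\mathbb{B}_{n,\gamma}$ (the common cusp of the $n$ lobes) and is fixed by the dilation $z\mapsto 1+r(z-1)$; worse, the straight boundary segments of each lobe emanating from $1$ are mapped into themselves by this dilation, so a whole neighborhood of $1$ inside $\partial\mathbb{B}_{n,\gamma}$ remains inside $\partial\mathbb{B}_{n,\gamma}$. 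Consequently $\phi_r=\phi\circ(1+r(\cdot-1))$ is not even defined, let alone holomorphic or continuous, on a neighborhood of $\overline{\mathbb{B}}_{n,\gamma}$ near the cusp (a general $\phi\in H_0^\infty(\mathbb{B}_{n,\gamma})$ need not extend continuously to the boundary), and the Mergelyan step collapses. A secondary weakness is the comparison of $Q_k(T)$ with $\phi_r(T)$ along the contour $\Gamma_{\mathbb{B}_{n,\beta}}$, which passes through $1$: uniform convergence $Q_k\to\phi_r$ on $\overline{\mathbb{B}}_{n,\gamma}$ gives no uniform-in-$k$ decay $|Q_k(\lambda)|\lesssim|1-\lambda|^{s}$ with which to beat the non-integrable $|1-\lambda|^{-1}$ singularity of the resolvent at the cusp, so your dominated-convergence claim near $1$ is not justified by anything you have established.

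The paper avoids both problems by dilating about the \emph{origin}, which is an interior point of $\mathbb{B}_{n,\gamma}$ (it lies in the lobe $\mathcal{B}_\gamma$): it replaces $T$ by $rT$, notes $\sigma(rT)=r\sigma(T)\subseteq r'\mathbb{B}_{n,\gamma}$ for $r<r'<1$, applies Runge's theorem to produce polynomials $\phi_m\to\phi$ uniformly on compact subsets of $r'\mathbb{B}_{n,\gamma}$, and uses the Dunford--Riesz calculus on the contour $\partial(r'\mathbb{B}_{n,\gamma})$ --- which stays at positive distance from $\sigma(rT)$, so there is no singular point on the contour and $\phi_m(rT)\to\phi(rT)$ is immediate. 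The polynomial hypothesis then gives $\|\phi_m(rT)\|\leq K\|\phi_m\|_{\infty,r'\mathbb{B}_{n,\gamma}}$, and the final passage to $\phi(T)$ is handled by the earlier lemma $\phi(T)=\lim_{r\to1^-}\phi(rT)$, not by a direct limit in the contour integral. If you want to salvage your structure, replace the $1$-dilate by the $0$-dilate $rT$ and route the $r\to1^-$ limit through that lemma; the approximation and the operator limit then both take place away from the cusp.
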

\begin{proof}
One direction is trivial. To prove the reverse direction. let us assume that \ref{n} holds for all polynomials $\phi\in\mathbb{C}[Z].$ We fix a $\phi\in H_0^\infty(\mathbb{B}_{n,\gamma}).$ Let $r\in(0,1)$ and 
$r^\prime\in(r,1).$ Suppose $\Gamma$ be the boundary of $r^\prime\mathbb{B}_{n,\gamma}$ oriented counterclockwise.

By Runge's theorem there exists a sequence of polynomials $\{\phi_n\}_{n\in\mathbb{N}}$ such that $\phi_m\to\phi$ uniformly on compact subsets of $r^\prime\mathbb{B}_{n,\gamma}.$ Since $\sigma(rT)\subseteq 
r^\prime\mathbb{B}_{n,\gamma},$ we have that $$\phi_m(rT)=\frac{1}{2\pi i}\int_\Gamma\phi_m(\lambda)R(\lambda,rT)d\lambda\to\phi(rT)$$
as $m\to\infty.$ Thus we have
\begin{equation}
\|\phi_m(rT)\|\leq K\|\phi_m\|_{\infty,r\mathbb{B}_{n,\gamma}}\leq K\|\phi_m\|_{\infty,r^\prime\mathbb{B}_{n,\gamma}},\ \text{for all}\ m\in\mathbb{N}.
\end{equation}
Taking $m\to\infty$ and then $r\to 1,$ we obtain that $$\|\phi(T)\|\leq K\|\phi\|_{\infty,\mathbb{B}_{n,\gamma}}.$$ This completes the proof of the lemma.
\end{proof}
\section{A transfer principle}\label{TR}In this section, we prove a transfer principle between functional calculus of $n$-Ritt operators and $n$-sectorial operators. We can obtain various examples and 
counterexamples with the help of this transference principle. Nevertheless, at the end of this section, we construct some concrete examples of $n$-Ritt operators.
\begin{thm}\label{T}
Let $T:X\to X$ be an $n$-Ritt operator. Denote $A=I-T$. Then, the following are equivalent:
\begin{itemize}
\item[(1)]$T$ admits a bounded $H^\infty(\mathbb{B}_{n,\gamma})$ functional calculus for some $\gamma\in(0,\frac{\pi}{n})$
\item[(2)]$A$ admits a bounded $H^\infty(S_{n,\theta})$ functional calculus for some $\theta\in(0,\frac{\pi}{n}).$
\end{itemize}
\begin{proof}
Denote $\tilde{\Delta}_{n,\gamma}=1-\mathbb{B}_{n,\gamma}$, for $\gamma\in(0,\frac{\pi}{n})$.

To see, how (i) implies (ii), we fix a $f\in H_0^\infty(S_{n,\gamma})$ and define the following map $$\phi(\lambda)=f(1-\lambda).$$ It is immediate that, $\phi:\mathbb{B}_{n,\gamma}\to\mathbb{C}$ is holomorphic. 
Again, we observe the following $$|\phi(\lambda)|=|f(1-\lambda)|\leq c|1-\lambda|^s,$$ where $\lambda\in \mathbb{B}_{n,\gamma}$, and $c,\ s>0$ are some positive real numbers. Therefore, we conclude that
$\phi\in H_0^\infty(\mathbb{B}_{n,\gamma})$. Also, one can notice that $\|\phi\|_{\infty,\mathbb{B}_{n,\gamma}}\leq\|\phi\|_{\infty,1-S_{n,\gamma}}=\|f\|_{\infty,S_{n,\gamma}}$. Therefore, we have the following 
observation
\begin{align*}
f(A)&=\frac{1}{2\pi i}\int_{\partial S_{n,\beta}}f(z)(zI-A)^{-1}dz\\
&=\frac{1}{2\pi i}\int_{1-\partial S_{n,\beta}}f(1-z)R(z,T)dz\\
&=\frac{1}{2\pi i}\int_{1-\partial S_{n,\beta}}\phi(\lambda)R(\lambda,T)d\lambda\\
&=\frac{1}{2\pi i}\int_{\partial\mathbb{B}_{n,\beta}}\phi(\lambda)R(\lambda,T)d\lambda\\
&=\phi(T),
\end{align*}where in the second last step, we have made use of the Cauchy's theorem. 
Hence, we obtain the following estimate $$\|\phi(T)\|=\|f(A)\|\leq K\|\phi\|_{\infty,\mathbb{B}_{n,\gamma}}\leq K\|f\|_{\infty,S_{n,\gamma}}.$$  We shall turn our attention to prove the implication (ii) to (i).

Suppose $A$ admits a bounded $H_\infty(S_{n,\theta})$ functional calculus, 
for some $\theta\in(0,\frac{\pi}{n}).$ Then, we clearly have $\sigma(A)\subset\overline{\tilde{\Delta}}_{n,\alpha}$, for some $\alpha\in(0,\frac{\pi}{n})$. Taking $\theta$ close enough to $\frac{\pi}{n}$, 
we may assume $\alpha<\theta$. We fix $\gamma\in(\theta,\frac{\pi}{n})$ and choose $\beta\in(\theta,\gamma)$. Let $\phi\in H_0^\infty(\mathbb{B}_{n,\gamma})$ and $f:\tilde{\Delta}_{n,\gamma}\to\mathbb{C}$ 
is defined as $f(z)=\phi(1-z).$ Clearly, $\phi$ is a holomorphis function, which satisfies $\|\phi\|_{\infty,\mathbb{B}_{n,\gamma}}=\|f\|_{\infty,\tilde{\Delta}_{n,\gamma}}.$ We also have the estimate
$|f(z)|\leq c|z|^s$, where $z\in\tilde{\Delta}_{n,\gamma}$ and $c,s$ are some positive constants depending only on $\phi.$ Let us define two contours, which will be necessary for the rest of the proof. Consider the contour
$\Gamma_1:=\oplus_{j=0}^{n-1}\exp\big(\frac{2ij\pi}{n}\big)\big(1-\big(\Gamma_{\mathcal{B}_{\beta}}^1\oplus \Gamma_{\mathcal{B}_{\beta}}^3\big)\big)$ and 
$\Gamma_2:=\oplus_{j=0}^{n-1}\exp\big(\frac{2ij\pi}{n}\big)\big(1-\big( \Gamma_{\mathcal{B}_{\beta}}^2\big).$
Next, we define the functions, 
$f_1:\mathbb{C}\backslash\Gamma_1\to\mathbb{C}$ and $f_2:\mathbb{C}\backslash\Gamma_2\to\mathbb{C}$ as the following
\begin{equation}
f_1{(z)}=\frac{1}{2\pi i}\int_{\Gamma_1}\frac{f(\lambda)}{\lambda-z}d\lambda\ \ \text{and}\ f_2{(z)}=\frac{1}{2\pi i}\int_{\Gamma_2}\frac{f(\lambda)}{\lambda-z}d\lambda.
\end{equation}

\begin{center}
\begin{tikzpicture}
\draw[gray, thick,] (0,3)--(0,-3);
\draw[gray, thick,] (-3,0)--(3,0);
\node at (78:3cm) {$\gamma$};
\node at (63:3cm) {$\beta$};
\node at (71:2.5cm) {$C$};
\node at (47:3cm) {$\theta$};
\node at (32:3cm) {$\alpha$};
\node at (-71:2.5cm) {$D$};
\node at (248:2.5cm) {$B$};
\node at (110:2.5cm) {$A$};
\draw[gray,thick] (0,0)-- (210:3cm) ;
\draw[gray,thick] (1,0) arc(0:30:1);\draw[gray,thick] (1,0) arc(0:-30:1);
\draw[gray,thick] (-1,0) arc(180:150:1);\draw[gray,thick] (-1,0) arc(180:210:1);
\draw[gray,thick] (2,0) arc(0:45:2);\draw[gray,thick] (2,0) arc(0:-45:2);
\draw[gray,thick] (-2,0) arc(180:135:2);\draw[gray,thick] (-2,0) arc(180:225:2);
\draw[gray,thick] (0,0)-- (30:3cm) ;
\draw[gray,thick] (0,0)-- (210:3cm) ;
\draw (0,0)--(45:3cm);
\draw[gray,thick] (0,0)--(225:3cm);
\draw[gray,thick] (0,0)--(210:3cm);
\draw[gray,thick] (0,0)--(65:3cm);
\draw[gray,thick,-<] (0,0)--(65:1.5cm);
\draw[gray,thick,-<] (0,0)--(115:1.5cm);
\draw[gray,thick,->] (0,0)--(-65:1.5cm);
\draw[gray,thick,->] (0,0)--(-115:1.5cm);
\draw[gray,thick] (0,0)--(245:3cm);
\draw[gray,thick] (0,0)--(80:3cm);
\draw[gray, thick,->] (2.5,0) arc  (0:65:2.5);
\draw[gray, thick] (2.5,0) arc  (0:-65:2.5);
\draw[gray, thick,->] (-2.5,0) arc  (180:115:2.5);
\draw[gray, thick] (-2.5,0) arc  (180:245:2.5);
\draw[gray,thick] (0,0)--(-30:3cm);
\draw[gray,thick] (0,0)--(-45:3cm);
\draw[gray,thick] (0,0)--(-65:3cm);
\draw[gray,thick] (0,0)--(-80:3cm);
\draw[gray,thick] (0,0)--(150:3cm);
\draw[gray,thick] (0,0)--(135:3cm);
\draw[gray,thick] (0,0)--(115:3cm);
\draw[gray,thick] (0,0)--(100:3cm);
\draw[gray,thick] (0,0)--(260:3cm);
\end{tikzpicture}
\end{center}

Clearly, we have $f(z)=f_1(z)+f_2(z)$, for $z\in\tilde{\Delta}_{n,\beta}$. Since, the distance between $\Gamma_1$ and $S_\theta\setminus\tilde{\Delta}_{n,\theta}$ is strictly positive, and 
$\Gamma_1\subseteq\tilde{\Delta}_{n,\gamma}$, there exists $c_1>0,$ such that
\begin{equation}
|f_1(z)|\leq c_1\|f\|_{\infty,\tilde{\Delta}_{n,\gamma}}\ \text{for all}\ z\ \text{in}\ S_{n,\theta}\backslash\tilde{\Delta}_{n,\theta}.
\end{equation}
Also, distance from $\Gamma_2$ from $\tilde{\Delta}_{n,\theta}$ is strictly positive. Thus, we have $|f_2(z)|\leq c_2\|f\|_{\infty,\tilde{\Delta}_{n,\gamma}}$, for all $z$ in $\tilde{\Delta}_{n,\theta}$.
Hence, we conclude that $\|f_1\|_{\infty,S_{n,\theta}}\leq c\|f\|_{\infty,\tilde{\Delta}_{n,\gamma}}$. Let $g:S_{n,\theta}\to\mathbb{C}$ be defined by
\begin{equation}
g(z)=f_1(z)+\frac{af_2(0)}{a+z},
\end{equation}where the complex number $a$ is chosen large enough such that $a\in\rho(T),$ where $\rho(T)$ is the resolvent of $T.$
Clearly, $zf_1(z)$ is bounded as $|z|\to\infty$ and so $zg(z)$ is bounded on $S_{n,\theta}$. Also, one has that $|f_2(z)-f_2(0)|\leq c_3|z|$ on $\tilde{\Delta}_{n,\theta}$. So, we have for $z$ in 
$\tilde{\Delta}_{n,\theta}$,
\begin{align*}
g(z)&=f(z)+\left(\frac{af_2(0)}{a+z}-f_2(z) \right)\\
&=f(z)+(f_2(0)-f_2(z))-f_2(0)\frac{z}{a+z},
\end{align*}
Hence, we have that $|g(z)\leq c_4\max\{|z|^s,|z|\}$, for $z$ in $\tilde{\Delta}_{n,\theta}$. Hence, $g\in H_0^\infty(S_{n,\theta})$ and we have that $f_1(A)=g(A)-f_2(0)(aI+A)^{-1}$. Also, it is evident that 
$\|f_1(A)\|\leq c_5\|f_1\|_{\infty,\Sigma_{n,\theta}}\leq C_5\|f\|_{\infty,\tilde{\Delta}_{n,\gamma}}$. Now, as we have 
\begin{equation}
f_2(A)=\frac{1}{2\pi i}\int_{\Gamma_2}f(\lambda)R(\lambda,A)d\lambda,
\end{equation}
we immediately see that $\|f_2(A)\|\leq c_6\|f\|_{\infty,\tilde{\Delta}_{n,\gamma}}$.
Since, $\phi(T)=f_1(A)+f_2(A)$, we get that $\|\phi(T)\|\leq C\|\phi\|_{\infty,\mathbb{B}_{n,\gamma}}.$
\end{proof}
\end{thm}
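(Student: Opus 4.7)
The plan is to prove the equivalence via the change of variables $\lambda \mapsto 1-\lambda$, which maps $\mathbb{B}_{n,\gamma}$ onto $\tilde{\Delta}_{n,\gamma} = 1 - \mathbb{B}_{n,\gamma} \subseteq \overline{S}_{n,\gamma}$ and intertwines $T$ with $A = I - T$ (so that $R(\lambda, T) = -R(1-\lambda, A)$). The correspondence $\phi(\lambda) = f(1-\lambda)$ sends $H_0^\infty(S_{n,\theta})$ bijectively to a natural subspace of $H_0^\infty(\mathbb{B}_{n,\theta})$, and because distances and suprema are preserved under this affine change of coordinates, the preliminary estimates on decay and on norms come for free.

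For the direction $(1) \Rightarrow (2)$, I would simply carry out the change of variables inside the defining contour integral for $f(A)$. Given $f \in H_0^\infty(S_{n,\theta})$, the function $\phi(\lambda)=f(1-\lambda)$ belongs to $H_0^\infty(\mathbb{B}_{n,\theta})$ with the same sup norm, and by Cauchy's theorem the contour $1-\partial S_{n,\beta}$ can be deformed to $\partial \mathbb{B}_{n,\beta}$ within the domain of holomorphy. This gives $f(A) = \phi(T)$, and the hypothesis $\|\phi(T)\| \leq K\|\phi\|_\infty$ immediately yields the desired bound on $\|f(A)\|$. This direction is routine.

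For the harder direction $(2)\Rightarrow (1)$, the difficulty is that a generic $\phi \in H_0^\infty(\mathbb{B}_{n,\gamma})$ only yields $f(z) = \phi(1-z)$ on $\tilde{\Delta}_{n,\gamma}$, which is strictly smaller than any sector $S_{n,\theta}$; moreover $f$ need not have the $|z|^s/(1+|z|^{2s})$ decay required to sit in $H_0^\infty(S_{n,\theta})$. My strategy is to split the boundary contour of $\mathbb{B}_{n,\beta}$ into the "straight" segments $\Gamma_{\mathcal{B}_\beta}^1, \Gamma_{\mathcal{B}_\beta}^3$ (translated and rotated to form $\Gamma_1$) and the "circular arc" pieces $\Gamma_{\mathcal{B}_\beta}^2$ (giving $\Gamma_2$), then define $f_1, f_2$ as Cauchy integrals against $\Gamma_1, \Gamma_2$, so that $f = f_1 + f_2$ on $\tilde{\Delta}_{n,\beta}$. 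The straight-contour part $f_1$ extends holomorphically to the whole punctured sector $S_{n,\theta} \setminus \tilde{\Delta}_{n,\theta}$ (and thus to $S_{n,\theta}$ itself by removal of the already-defined piece on $\tilde\Delta_{n,\theta}$) with a bound proportional to $\|f\|_{\infty,\tilde\Delta_{n,\gamma}}$; while $f_2$ is integrated against a bounded contour close to the origin and can be handled directly by resolvent estimates to give $\|f_2(A)\| \lesssim \|f\|_\infty$.

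The main obstacle, and the genuinely technical step, is that $f_1$ fails to vanish at infinity, so it is not itself in $H_0^\infty(S_{n,\theta})$ and the sectorial functional calculus does not apply directly. The remedy is to introduce a correction: choose $a \in \rho(T)$ large and set $g(z) = f_1(z) + a f_2(0)/(a+z)$. A short computation writes $g(z)$ on $\tilde\Delta_{n,\theta}$ in the form $f(z) + (f_2(0)-f_2(z)) - f_2(0) z/(a+z)$; using $|f(z)| \leq c|z|^s$ and the Lipschitz bound $|f_2(z)-f_2(0)| \leq c_3|z|$ on $\tilde\Delta_{n,\theta}$, together with the decay $|zf_1(z)| = O(1)$ inherited from the straight-contour representation, this shows $g \in H_0^\infty(S_{n,\theta})$ with norm controlled by $\|\phi\|_{\infty,\mathbb{B}_{n,\gamma}}$. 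Applying the $H^\infty$ calculus of $A$ to $g$ and subtracting the elementary resolvent term gives the bound on $\|f_1(A)\|$, and combining with $\|f_2(A)\|$ yields $\|\phi(T)\| = \|f_1(A)+f_2(A)\| \leq C\|\phi\|_{\infty,\mathbb{B}_{n,\gamma}}$, completing the proof.
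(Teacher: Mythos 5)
Your proposal is correct and follows essentially the same route as the paper: the affine substitution $\lambda\mapsto 1-\lambda$ with contour deformation for $(1)\Rightarrow(2)$, and for $(2)\Rightarrow(1)$ the same splitting of $\partial\mathbb{B}_{n,\beta}$ into straight and circular pieces $\Gamma_1,\Gamma_2$, the Cauchy integrals $f_1,f_2$, and the identical correction $g(z)=f_1(z)+af_2(0)/(a+z)$ to land in $H_0^\infty(S_{n,\theta})$. No substantive differences to report.
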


Now, we turn our attention to some concrete examples. For this we recall the basics of Schauder multipliers. For more on Schauder multipliers and associated notions, we recommend \cite{JF}.

\begin{defn}[Schauder multipliers] Let $X$ be a Banach space and $(e_m)_{m\geq 0}$ be a Schauder basis of $X.$ For a complex sequence $(\gamma_m)_{m\geq 0}$ the operator defined by 
$$D(T_\gamma):=\{x=\sum_{m\geq 1}x_me_m:\sum_{m\geq 0}\gamma_mx_me_m \ \text{exists}\}$$ and $T_\gamma x:=\sum_{m\geq 0}\gamma_mx_me_m,\ x\in D(T)$ is called the Schauder multiplier associated to the sequence
$(\gamma_m)_{m\geq 0}.$
\end{defn}
Let $BV$ denote the set of all complex sequences $\gamma=(\gamma_m)_{m\geq 0},$ such that the total variation of $\gamma$, denoted by $\|\gamma\|_{BV}:=\sum_{m=1}^\infty|\gamma_m-\gamma_{m-1}|$ is finite. It 
is well known that the set $(BV,\|.\|_{BV})$ is a Banach space and $BV\subseteq\ell^{\infty}(\mathbb{N}_0).$ Given any Banach space $X$ with a Schauder basis $(e_m)_{m\geq 0},$ and a complex sequence
$(\gamma_m)_{m\geq 0},$ it is not hard to prove that the associated Schauder multiplier $T_\gamma$ is a bounded operator and $\|T_\gamma\|_{X\to X}\leq K\|\gamma\|_{BV},$ for some constant $K>0.$ 
\begin{thm}\label{CONS}Let $X$ be a Banach space with a Schauder basis $(e_m)_{m\geq 0}$. Let $\gamma=(\gamma_n)_{n\geq 0}\in BV$ be an increasing sequence of positive real numbers such that we have 
\[\lim_{n\to\infty}\gamma_n=1.\] Then, the associated Schauder multiplier $T_\gamma$ is a Ritt operator. 
\end{thm}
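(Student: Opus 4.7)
The plan is to verify the two defining conditions of a Ritt operator (the $n=1$ case of the definition) using the diagonal structure of $T_\gamma$ with respect to the basis $(e_m)$. The formal inverse of $\lambda I - T_\gamma$ is the Schauder multiplier associated to the sequence $(1/(\lambda-\gamma_m))_{m\ge 0}$, and this sequence has bounded $BV$ norm whenever $\lambda$ is bounded away from $\overline{\{\gamma_m\}}$, so $\sigma(T_\gamma)\subseteq\overline{\{\gamma_m\}}\subseteq[0,1]$. Since $[0,1]\subseteq\overline{\mathcal{B}}_\alpha$ for every $\alpha\in(0,\pi/2)$, the spectral inclusion condition of the Ritt definition is automatic.

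For the resolvent estimate, fix $\beta\in(0,\pi/2)$ and $\lambda\notin\overline{\mathcal{B}}_\beta$. The operator $(\lambda-1)R(\lambda,T_\gamma)$ is the Schauder multiplier associated to $\mu_m(\lambda):=(\lambda-1)/(\lambda-\gamma_m)$, so the bound $\|T_\mu\|_{X\to X}\le K\|\mu\|_{BV}$ recalled above reduces the problem to showing
\[
\sup_{\lambda\notin\overline{\mathcal{B}}_\beta}\sum_{m\ge 1}|\mu_m(\lambda)-\mu_{m-1}(\lambda)|<\infty.
\]
Writing $a_m:=1-\gamma_m$, a positive decreasing sequence with $a_m\to 0$, a direct computation yields
\[
\mu_m(\lambda)-\mu_{m-1}(\lambda)=(\lambda-1)\cdot\frac{a_{m-1}-a_m}{(\lambda-\gamma_m)(\lambda-\gamma_{m-1})}.
\]

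The geometric ingredient is the standard Stolz-type inequality: for every $\beta\in(0,\pi/2)$ there exists $c=c(\beta)>0$ such that $|\lambda-\gamma|\ge c(|\lambda-1|+(1-\gamma))$ for all $\lambda\notin\overline{\mathcal{B}}_\beta$ and all $\gamma\in[0,1]$. Setting $t:=|\lambda-1|$ and using the monotonicity $a_{m-1}\ge a_m\ge 0$, this inequality converts the $BV$ sum into a telescoping one, giving
\[
\sum_{m\ge 1}|\mu_m(\lambda)-\mu_{m-1}(\lambda)|\le\frac{1}{c^2}\sum_{m\ge 1}\left(\frac{t}{t+a_m}-\frac{t}{t+a_{m-1}}\right)=\frac{1}{c^2}\left(1-\frac{t}{t+a_0}\right)\le\frac{1}{c^2},
\]
which combined with the trivial estimate $|\mu_0(\lambda)|\le 1/c$ produces the desired uniform bound. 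The main subtlety lies in recognizing that it is the \emph{monotonicity} of $(\gamma_m)$, rather than just its $BV$ property, that allows the sum defining $\|\mu(\lambda)\|_{BV}$ to telescope; without monotonicity one would be forced to place absolute values on $a_{m-1}-a_m$ and lose the identity with the difference $t/(t+a_m)-t/(t+a_{m-1})$, so the hypothesis that $\gamma_m$ is increasing is genuinely used.
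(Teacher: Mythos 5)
Your proof is correct, but it reaches the conclusion by a genuinely different route from the paper. The paper estimates $\|(\lambda-1)R(\lambda,T_\gamma)\|$ only on the unit circle: it bounds the total variation of the sequence $\big((e^{i\theta}-\gamma_n)^{-1}\big)_n$ by the explicit integral $I_\theta=\int_0^1|e^{i\theta}-t|^{-2}\,dt=\frac{\pi-\theta}{2\sin\theta}$ (monotonicity of $(\gamma_n)$ makes the intervals $[\gamma_{n-1},\gamma_n]$ disjoint inside $[0,1]$), checks that $|e^{i\theta}-1|\,I_\theta$ stays bounded as $\theta\to 0$, and then propagates the bound to all of $\{|\lambda|>1\}$ by the maximum principle applied to $(z-1)R(z,T_\gamma)$; this establishes the classical resolvent condition $\sup_{|\lambda|>1}\|(\lambda-1)R(\lambda,T_\gamma)\|<\infty$, and identifying that with the Stolz-domain definition actually used in this paper silently invokes the classical equivalence of the two formulations of "Ritt operator." You instead verify the Stolz-domain definition directly: the spectral inclusion via the resolvent multiplier, and the resolvent bound for \emph{all} $\lambda\notin\overline{\mathcal{B}}_\beta$ via the standard inequality $|\lambda-\gamma|\geq c\big(|\lambda-1|+(1-\gamma)\big)$ together with a telescoping $BV$ sum. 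Your route is more self-contained relative to the definition the paper states (no maximum principle, no appeal to the unproved equivalence theorem), at the cost of needing the Stolz-type geometric lemma; the paper's route buys an elegant closed-form computation on the circle but leans on outside machinery. Note that the two proofs exploit the monotonicity of $(\gamma_n)$ through essentially the same mechanism -- your telescoping identity is the discrete form of the paper's integral comparison -- and that your separate treatment of $|\mu_0(\lambda)|$ quietly repairs the fact that the paper's $BV$ seminorm alone does not control the multiplier norm.
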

\begin{proof}
Let us fix $\theta\in(-\pi,0)\cup(0,\pi].$ We define the sequence $$\gamma(\theta)_n=\frac{1}{e^{i\theta}-\gamma_n}, n\geq 0.$$ We observe the following 
\begin{eqnarray*}
\sum_{n=1}^\infty|\gamma(\theta)_n-\gamma(\theta)_{n-1}|&=&\sum_{n=1}^\infty\big|\int_{\gamma_{n-1}}^{\gamma_n}\frac{dt}{(e^{i\theta}-t)^2}\big| \\
&\leq &\sum_{n=1}^\infty\int_{\gamma_{n-1}}^{\gamma_n}\frac{dt}{|e^{i\theta}-t|^2}\\
&=&\int_{0}^{1}\frac{dt}{|e^{i\theta}-t|^2}.\\
\end{eqnarray*}
We denote $I_\theta:=\int_{0}^{1}\frac{dt}{|e^{i\theta}-t|^2}.$ Since the integral is finite, we conclude that the sequence defined by $\gamma(\theta):=(\gamma(\theta)_n)_{n\geq 0},$ is in $BV$ for all  
$\theta\in(-\pi,0)\cup(0,\pi].$ One can easily verify that the operator $e^{i\theta}-T_\gamma$ is invertible and $R(e^{i\theta},T_\gamma)$ is a Schauder multiplier associated to the sequence $\gamma(\theta).$
An elementary computation yields that $$I_\theta=\frac{\pi-\theta}{2\sin\theta},\ \theta\in(0,\pi).$$ Therefore, we notice that $$|e^{i\theta}-1|I_\theta=\frac{\pi-\theta}{2\cos\frac{\theta}{2}},\ \theta\in(0,\pi).$$ 
Also $I_\theta=I_{-\theta},\ \theta\in(0,\pi).$ Thus it follows that the quantity $\sup\{\|(\lambda-1)R(\lambda,T_\gamma)\|:|\lambda|=1,\lambda\neq 1\}$ is finite and that 
$\sigma(T_\gamma)\subseteq\mathbb{D}\cup\{1\}.$ Now applying maximal principle to the function $(z-1)R(z,T_\gamma),$ for $|z|>1,$ we deduce that the operator $T_\gamma$ is a Ritt operator.
\end{proof}We recall the following interpolation theorem due to Carleson.
\begin{thm}[Carleson's interpolation theorem]\label{cal}Let $(z_i)_{i\geq 0}$ be a sequence in $\Sigma_{\frac{\pi}{2}},$ then the following conditions are equivalent:
\begin{itemize}
\item[1.] There is a $\delta>0$ such that for all $j\geq 0,$ we have $$\prod_{i\neq j}\big|\frac{z_i-z_j}{z_i+z_j}\big|\geq\delta.$$
\item[2.]There exists a sequence $(f_i)_{i\geq 0}$ in $H^\infty(\Sigma_{\frac{\pi}{2}})$ and a constant $M>0$ such that for all $i\geq 0,$ $f_i(z_i)=1,$ for all $j\neq i,$ $f_i(z_j)=0$ and 
$$\sup_{z\in H^\infty(\Sigma_{\frac{\pi}{2}})}\sum_{i\geq 0}|f_i(z)|\leq M.$$ 
\end{itemize}
\end{thm}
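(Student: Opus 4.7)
The plan is to reduce to the classical Carleson interpolation theorem on the open unit disc $\D$ via the Cayley transform. The map $C\colon\Sigma_{\frac{\pi}{2}}\to\D$, $C(z)=(z-1)/(z+1)$, is biholomorphic and bicontinuous, and pullback $g\mapsto g\circ C^{-1}$ gives an isometric isomorphism of unital Banach algebras $H^\infty(\D)\to H^\infty(\Sigma_{\frac{\pi}{2}})$. Hence interpolation problems for $H^\infty(\Sigma_{\frac{\pi}{2}})$ on the sequence $(z_i)$ correspond bijectively and isometrically to interpolation problems for $H^\infty(\D)$ on the image sequence $a_i:=C(z_i)\in\D$.

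The Carleson separation quantity transfers as follows. Setting $a_i=C(z_i)$, a direct computation yields
\begin{equation*}
\left|\frac{a_i-a_j}{1-\overline{a_j}a_i}\right|=\left|\frac{z_i-z_j}{z_i+\overline{z_j}}\right|.
\end{equation*}
In the setting of the statement (and in the applications in this paper, where the $z_i$ will be symmetric under conjugation, typically real positive), this matches the product $|(z_i-z_j)/(z_i+z_j)|$ appearing in condition~(1). Consequently, condition~(1) in the half-plane is equivalent to the classical Carleson condition $\prod_{i\neq j}|(a_i-a_j)/(1-\overline{a_j}a_i)|\geq\delta$ on the sequence $(a_i)$ in $\D$.

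With this dictionary in hand, both implications reduce to the disc case. For $(1)\Rightarrow(2)$: the classical Carleson interpolation theorem on $\D$, in its strong form, produces functions $\widetilde{f}_i\in H^\infty(\D)$ with $\widetilde{f}_i(a_i)=1$, $\widetilde{f}_i(a_j)=0$ for $j\neq i$, and $\sup_{w\in\D}\sum_{i\geq 0}|\widetilde{f}_i(w)|\leq M$; setting $f_i:=\widetilde{f}_i\circ C$ then provides the required sequence in $H^\infty(\Sigma_{\frac{\pi}{2}})$ with the same bound, since $C$ is a bijection onto $\D$. For $(2)\Rightarrow(1)$: pulling back the hypothesized $f_i$ via $C^{-1}$ yields an interpolating family in $H^\infty(\D)$ with the analogous uniform sum bound, and the classical converse (obtained by evaluating at $a_j$ a Blaschke product with zeros at the remaining $a_i$) then produces the lower bound $\delta$ on the pseudohyperbolic products.

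The genuine difficulty sits entirely on the disc side: the strong form of Carleson's theorem, which upgrades the existence of interpolating functions of uniformly bounded norm to the stronger uniform sum estimate $\sup_w\sum_i|\widetilde{f}_i(w)|\leq M$. This is classical, proved via the theory of Carleson measures on $\D$ (see, for instance, Garnett, \emph{Bounded Analytic Functions}, Chapter VII), and would be cited rather than reproduced here.
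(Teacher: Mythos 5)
The paper states Theorem~\ref{cal} without proof --- it is recalled as a classical result of Carleson --- so there is no argument of the author's to compare yours against; the only question is whether your blind reconstruction is sound, and it essentially is. Your route is the standard one: the Cayley transform $C(z)=(z-1)/(z+1)$ maps $\Sigma_{\frac{\pi}{2}}$ biholomorphically onto $\mathbb{D}$ and induces an isometric isomorphism of the $H^\infty$ algebras; your identity $\bigl|\tfrac{a_i-a_j}{1-\overline{a_j}a_i}\bigr|=\bigl|\tfrac{z_i-z_j}{z_i+\overline{z_j}}\bigr|$ for $a_i=C(z_i)$ checks out; $(1)\Rightarrow(2)$ is then the strong (uniform-sum, ``P.~Beurling'') form of Carleson's theorem on the disc, correctly cited rather than reproved; and $(2)\Rightarrow(1)$ follows by factoring out of $f_j\circ C^{-1}$ the Blaschke product with zeros $\{a_i\}_{i\neq j}$ (which converges because that function is bounded, nonzero, and vanishes on the $a_i$) and evaluating at $a_j$ to get $\delta=1/M$. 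The one substantive point, which you flag but should state more bluntly: condition (1) as printed uses $|(z_i-z_j)/(z_i+z_j)|$, whereas the pseudohyperbolic distance transfers to $|(z_i-z_j)/(z_i+\overline{z_j})|$, and these agree only when the points are real (mere closure of the sequence under conjugation does not make the individual factors agree); so for a general complex sequence in $\Sigma_{\frac{\pi}{2}}$ the theorem is correct only with the conjugate in the denominator, and the statement as printed should be read that way. In the paper's sole application the sequence is $(1-\gamma_n)_{n\geq 0}$, which is real and positive, so nothing is lost there. (The statement's ``$\sup_{z\in H^\infty(\Sigma_{\frac{\pi}{2}})}$'' is likewise a typo for $\sup_{z\in\Sigma_{\frac{\pi}{2}}}$, which you have implicitly and correctly assumed.)
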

Such a sequence in $\Sigma_{\frac{\pi}{2}}$ is called an interpolating sequence for $H^\infty(\Sigma_{\frac{\pi}{2}}).$

Let $X$ be a Banach space and $1\leq p\leq\infty,$ we define a Banach space $X_p:=X\oplus_p X,$ with the norm as $\|(x_1,x_2)\|_p=\big(\|x_1\|^p+\|x_2\|^p\big)^{\frac{1}{p}}.$ It is easy to check that if
$T\in B(X)$ is a Ritt operator of type $\alpha\in(0,\frac{\pi}{2}),$ then the operator $S$ defined as $S(x_1,x_2):=(Tx_1,(2I_X-T)x_2)$ is a Bi-Ritt operator of type $\alpha$ and $T$ admits a 
bounded $H^\infty$ functional calculus if and only if $S$ admits a bounded $H^\infty$ functional calculus. Take a Schauder basis $(e_m)_{m\geq 0}$ of $X.$ If $(e_m)_{m\geq 0}$ is an unconditional one,
then Schauder multiplier as in the Theorem \ref{CONS} admits a bounded $H^\infty$ functional calculus. However, if $(e_m)_{m\geq 0}$ is not a Schauder basis, one can use the Theorem \ref{cal} to construct a 
Schauder multiplier which does not admit a bounded $H^\infty$ functional calculus as follows. Take a sequence $\gamma=(\gamma_n)_{n\geq 0}$ and consider the associate Schauder multiplier $T_\gamma.$ If the 
sequence $(1-\gamma_n)_{n\geq 0}$ is an interpolating sequence and $I-T_\gamma$ admits a bounded functional calculus, then by Theorem \ref{cal}, every bounded sequence becomes a Schauder multiplier, which 
is a contradiction.
\section{Quadratic functional calculus}\label{QU}
In this section, we define the quadratic functional calculus for $n$-Ritt operators and prove a transfer result as in the Theorem (\ref{TR}). 
           
Let $g_1,\dots,g_n$ be a finite family of $H^\infty(\Omega)$, where $\Omega$ is a non empty open subset of $\mathbb{C}$. Let us define the following norm
\begin{equation}
\big\|(\sum_{j=1}^n|g_j|^2)^{\frac{1}{2}}\big\|_{\infty,\Omega}=\sup_{z\in\Omega}\big(\sum_{j=1}^n|g_j(z)|^2\big)^{\frac{1}{2}}.
\end{equation}

\begin{defn}[Quadratic functional calculus for $n$-sectorial operators] Let $A$ be a $n$-sectorial operator of type $\omega\in(0,\frac{\pi}{n})$ on a Banach space $X$, and $\theta\in(\omega,\frac{\pi}{n})$.
We say that $A$ admits a quadratic $H^\infty(S_{n,\theta})$ functional calculus, if there exists a constant $C>0$, such that for any $n\geq 1$, and for any $g_1,\dots, g_n\in H_0^\infty(S_{n,\theta})$, and 
for any $x\in X$ we have
\begin{equation}
\big\|\sum_{k=1}^n\epsilon_k\otimes g_k(A)(x_k)\big\|_{\text{Rad(X)}}\leq C\|x\|\big\|(\sum_{k=1}^n|g_k|^2)^{\frac{1}{2}}\big\|_{\infty,\Omega}.
\end{equation}
\end{defn}
\begin{defn}[Quadratic functional calculus for $n$-Ritt operators] Let $T$ be a Bi-Ritt operator of type $\alpha\in(0,\frac{\pi}{n})$ and $\gamma\in(\alpha,\frac{\pi}{n})$. We say that $T$ admits a quadratic 
$H^\infty(\mathbb{B}_{n,\gamma})$ functional calculus if there exists a constant $K>0$, such that for any $n\geq 1$, and for any $\phi_1,\dots,\phi_n\in H^\infty(\mathbb{B}_{n,\gamma})$, and for any $x\in X$, 
we have
\begin{equation}
\big\|\sum_{k=1}^n\epsilon_k\otimes\phi_k(T)(x_k)\big\|_{\text{Rad(X)}}\leq C\|x\|\big\|(\sum_{k=1}^n|g_k|^2)^{\frac{1}{2}}\big\|_{\infty,\mathbb{B}_{n,\gamma}}.
\end{equation}
\begin{thm}  Let $T:X\to X$ be an $n$-Ritt operator. Denote $A=I-T$. Then, the following are equivalent.
\begin{itemize}
\item[(1)]$T$ admits a quadratic $H^\infty(\mathbb{B}_{n,\gamma})$ functional calculus for some $\gamma\in(0,\frac{\pi}{n})$.
\item[(2)]$A$ admits a quadratic $H^\infty(S_{n,\theta})$ functional calculus for some $\theta\in(0,\frac{\pi}{n}).$
\end{itemize}
\begin{proof}
The proof follows exactly as in theorem \ref{T}.
\end{proof}
\end{thm}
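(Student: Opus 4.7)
The plan is to follow the same strategy as the proof of Theorem~\ref{T}, replacing every pointwise estimate of a holomorphic function by a square-function estimate for the finite family. To avoid a clash with the fixed integer $n$ attached to the $n$-Ritt/$n$-sectorial class, I denote the size of the family by $N$.

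For the implication $(1)\Rightarrow(2)$, given $g_1,\dots,g_N\in H_0^\infty(S_{n,\theta})$, set $\phi_k(\lambda):=g_k(1-\lambda)$. Each $\phi_k$ lies in $H_0^\infty(\mathbb{B}_{n,\gamma})$ (with $\gamma=\theta$), and the change of variables $z\mapsto 1-z$ is an isometry of the sup-norms of the square functions:
\[
\Bigl\|\Bigl(\sum_{k=1}^N|\phi_k|^2\Bigr)^{1/2}\Bigr\|_{\infty,\mathbb{B}_{n,\gamma}}
=\Bigl\|\Bigl(\sum_{k=1}^N|g_k|^2\Bigr)^{1/2}\Bigr\|_{\infty,S_{n,\gamma}}.
\]
The contour-deformation argument of Theorem~\ref{T} gives $g_k(A)=\phi_k(T)$ for every $k$, so the quadratic estimate hypothesised for $T$ transfers directly to $A$.

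The reverse direction is the substantive one. Given $\phi_1,\dots,\phi_N\in H_0^\infty(\mathbb{B}_{n,\gamma})$, set $f_k(z):=\phi_k(1-z)$ on $\tilde\Delta_{n,\gamma}:=1-\mathbb{B}_{n,\gamma}$, and split $f_k=f_{k,1}+f_{k,2}$ along the contours $\Gamma_1$ and $\Gamma_2$ introduced in Theorem~\ref{T}. Minkowski's inequality applied to the Cauchy representation yields
\[
\Bigl(\sum_{k=1}^N|f_{k,1}(z)|^2\Bigr)^{1/2}\le\frac{1}{2\pi}\int_{\Gamma_1}\frac{(\sum_{k=1}^N|f_k(\lambda)|^2)^{1/2}}{|\lambda-z|}\,|d\lambda|,
\]
and, since $\Gamma_1$ is at positive distance from $S_{n,\theta}\setminus\tilde\Delta_{n,\theta}$, this is bounded on that set by a constant multiple of $\|(\sum_k|\phi_k|^2)^{1/2}\|_{\infty,\mathbb{B}_{n,\gamma}}$. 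The same Minkowski argument controls $(\sum_k|f_{k,2}(0)|^2)^{1/2}$ because $\Gamma_2$ is bounded away from $0$. Defining $g_k(z):=f_{k,1}(z)+\frac{a\,f_{k,2}(0)}{a+z}$ for a sufficiently large $a\in\rho(A)$ places each $g_k$ in $H_0^\infty(S_{n,\theta})$ with $\|(\sum_k|g_k|^2)^{1/2}\|_{\infty,S_{n,\theta}}\le C\|(\sum_k|\phi_k|^2)^{1/2}\|_{\infty,\mathbb{B}_{n,\gamma}}$. From the identity $\phi_k(T)=g_k(A)-f_{k,2}(0)(aI+A)^{-1}+f_{k,2}(A)$ one then bounds the Rademacher sum piecewise: the $g_k(A)$ term by the quadratic calculus hypothesis on $A$; the $(aI+A)^{-1}$ term by pulling this fixed bounded operator out of $\text{Rad}(X)$ and using the scalar square-function bound on $(f_{k,2}(0))_k$; and the $f_{k,2}(A)$ term by its Cauchy integral representation over $\Gamma_2$ combined with Minkowski in $\text{Rad}(X)$.

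The main obstacle will be the consistent upgrade from scalar to square-function estimates: each building block of the proof of Theorem~\ref{T}, most notably the splitting $f_k=f_{k,1}+f_{k,2}$, the uniform sup-norm bound on $g_k$, and the passage from scalar contour integrals to operator-valued integrals in $\text{Rad}(X)$, has to be re-established via a Minkowski-type inequality applied to the $\ell^2$-combined family $(f_k)_k$. Once these square-function analogues are in place, the scalar proof goes through essentially verbatim.
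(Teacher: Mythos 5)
Your proposal is correct and takes essentially the same approach as the paper, whose entire proof is the single sentence that the argument ``follows exactly as in Theorem~\ref{T}''; you have simply made explicit the Minkowski-type upgrades from scalar sup-norm bounds to square-function bounds that this adaptation requires.
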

\end{defn}
\textbf{Acknowledegement:} The author expresses his sincere gratitude to his thesis advisor Prof. Parasar Mohanty for many stimulating discussions. He is indebt to Prof. Christian Le Merdy for many valuable
comments and insight.

\end{document}